\setlist{nolistsep}
\def \PFq[#1]{\mathbf{P}^#1(\mathbb{F}_q)}
\def \PF[#1,#2]{PG(#1,#2)}
\newtheorem{theorem}{Theorem}
\newtheorem{lemma}[theorem]{Lemma}
\begin{document}

\title{Two theorems on point-flat incidences.}
\author{Ben Lund \footnote{Princeton University. Work on this project was supported by NSF grants DMS-1802787 and DMS-1344994.}}

\maketitle

\begin{abstract}
We improve the theorem of Beck giving a lower bound on the number of $k$-flats spanned by a set of points in real space, and improve the bound of Elekes and T\'oth on the number of incidences between points and $k$-flats in real space.
\end{abstract}

\section{Introduction}

Let $P$ be a set of $n$ points in $d$-dimensional real affine space.
For any integers $0 \leq k \leq d$ and $1 \leq r \leq n$, a $k$-flat ($k$-dimensional affine subspace) $\Gamma$ is $r$-rich if it contains at least $r$ points of $P$.
A $k$-flat $\Gamma$ is spanned by $P$ if $\Gamma$ contains $k+1$ points of $P$ that are not contained in a $(k-1)$-flat.
This paper gives new results on two well-studied questions:
\begin{enumerate}
\item How many $r$-rich $k$-flats can be determined by $P$?
\item How few $k$-flats can be spanned by $P$?
\end{enumerate}

A fundamental result in combinatorial geometry is the Szemer\'edi-Trotter theorem \cite{szemeredi1983extremal}, which gives an upper bound on the number of $r$-rich lines determined by a set of points.

Throughout this paper, $n$ is used for a positive integer, and $P$ is used for a set of $n$ points in real affine space.
None of the theorems depend on the dimension of the space.

\begin{theorem}[Szemer\'edi, Trotter]\label{th:sz-t}
For any integer $r > 1$, the number of $r$-rich lines determined by $P$ is bounded above by $O(n^2 r^{-3} + n r^{-1})$.
\end{theorem}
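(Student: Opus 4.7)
The plan is to prove this via the crossing number inequality, following Sz\'ekely's elegant argument. The crossing number inequality states that any graph $G = (V, E)$ drawn in the plane with $|E| \ge 4|V|$ satisfies $\operatorname{cr}(G) \ge |E|^3 / (64 |V|^2)$, and this is the main external tool I would invoke.

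Let $L_r$ denote the number of $r$-rich lines determined by $P$. First I would construct an auxiliary drawing of a graph $G$: take the vertex set to be $P$, and for each $r$-rich line $\ell$, order the at-least-$r$ points of $P$ lying on $\ell$ along $\ell$ and add an edge between each consecutive pair, drawn along the line segment between them. This gives $|V| = n$ and $|E| \ge (r-1) L_r$. Since any two distinct lines meet in at most one point, two edges drawn along different lines cross at most once, so the total number of crossings in this drawing is at most $\binom{L_r}{2} < L_r^2 / 2$.

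Next I would split into two cases depending on whether the hypothesis of the crossing lemma applies. If $(r-1) L_r < 4n$, then trivially $L_r = O(n/r)$, which contributes the $n/r$ term. Otherwise, the crossing number inequality applies and yields
\[
\frac{L_r^2}{2} \;\ge\; \operatorname{cr}(G) \;\ge\; \frac{((r-1) L_r)^3}{64\, n^2},
\]
which rearranges to $L_r \le 32\, n^2 / (r-1)^3 = O(n^2 / r^3)$. Combining the two cases gives the desired bound $O(n^2 r^{-3} + n r^{-1})$.

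The main thing to be careful about is not an obstacle per se but a bookkeeping point: one must verify that the edges of $G$ are genuinely drawn as curves in the plane (the straight segments between consecutive collinear points suffice and are automatically non-self-intersecting), and that we are correctly counting crossings between pairs of edges lying on different lines (edges on the same line do not cross at all by construction, which is exactly why ordering points along each line matters). The case split based on whether $|E| \ge 4|V|$ is forced by the hypothesis of the crossing lemma and is the reason the bound naturally splits into the two terms $n^2/r^3$ and $n/r$.
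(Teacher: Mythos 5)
Your proof is correct, but note that the paper does not prove this statement at all: Theorem \ref{th:sz-t} is quoted from Szemer\'edi and Trotter's paper and used as a black box (in the proof of Theorem \ref{th:et-3} and in the base case of Lemma \ref{th:comb-bound}), so there is no in-paper argument to compare against. What you have reconstructed is Sz\'ekely's crossing-number proof, which is a standard and complete route to the bound. The details check out: the graph is simple (two points determine a unique line, so no multi-edges), edges on a common line meet only at shared vertices, two edges on distinct lines cross at most once, so $\operatorname{cr}(G) \leq \binom{L_r}{2}$, and the case split on $(r-1)L_r \geq 4n$ versus $(r-1)L_r < 4n$ correctly matches the hypothesis of the crossing number inequality and produces the two terms $n^2r^{-3}$ and $nr^{-1}$. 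The only bookkeeping point worth making explicit is that the statement is only meaningful for $r \geq 2$ (an $r$-rich line \emph{determined} by $P$ contains at least two points of $P$), so that $r-1 \geq r/2$ and the factors of $r-1$ can be absorbed into the $O$-notation; with that remark your argument is a complete and correct proof, just one imported from outside the paper rather than a variant of anything in it.
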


An alternate, and perhaps more well known, formulation of Theorem \ref{th:sz-t} is that the number of incidences between sets of $n$ points and $m$ lines in real space is bounded above by $O(n^{2/3}m^{2/3} + m + n)$.
These formulations are equivalent.

The first result of this paper is an upper bound on $r$-rich $k$-flats, for $k>1$.
In order to prove a nontrivial bound in this context, we need to place some restriction on the points or the flats.
To illustrate this point, let $L$ be a set of planes that each contain a fixed line, and $P$ a set of $n$ points contained in the same line.
Then, each plane of $L$ is $r$-rich for all $r \leq n$.

Several point-flat incidence bounds have been proved, using a variety of nondegeneracy conditions.
Initial work on point-plane incidences was by Edelsbrunner, Guibas, and Sharir \cite{edelsbrunner1990complexity}, who considered point sets with no three collinear points, and also incidences between planes and vertices of their arrangement.
Agarwal and Aronov \cite{agarwal1992counting} gave an asymptotically tight bound on the number of incidences between vertices and flats of an arrangement of $k$-flats; this bound was generalized by the author, Purdy, and Smith \cite{lund2011bichromatic} to incidences between vertices of an arrangement of flats and a subset of the flats of the arrangement.
Brass and Knauer \cite{brass2003counting}, as well as Apfelbaum and Sharir \cite{apfelbaum2007large}, showed that a point-flat incidence graph with many edges must contain a large complete bipartite graph.
Sharir and Solomon \cite{sharir2016incidences} obtain a stronger bound than that of Apfelbound and Sharir for point-plane incidences by adding the condition that the points are contained in an algebraic variety of bounded degree.

In this paper, we use the nondegeneracy assumption introduced by Elekes and T\'oth \cite{elekes2005incidences}.
For any real $\alpha \in (0,1)$, a $k$-flat $\Gamma$ is {\em $\alpha$-nondegenerate} if at most $\alpha |P \cap \Gamma|$ points of $P$ lie on any $(k-1)$-flat contained in $\Gamma$.
Using this definition, Elekes and T\'oth proved the following Szemer\'edi-Trotter type theorem for points and planes.
\begin{theorem}[Elekes, T\'oth]\label{th:et-2}
For any real $\alpha$ with $0 < \alpha < 1$ and integer $r > 2$, the number of $\alpha$-nondegenerate, $r$-rich planes determined by $P$ is bounded above by $O_\alpha(n^3r^{-4} + n^2 r^{-2})$.
\end{theorem}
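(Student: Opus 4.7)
Let $\Pi$ denote the set of $\alpha$-degenerate $r$-rich planes. The plan is to double-count ``good pairs'' $(s,\pi)$ with $s \in P \cap \pi$ and $\pi \in \Pi$, combining Beck's theorem inside each plane with Szemer\'edi--Trotter applied after projection from points of $P$.

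\emph{Step 1 (Thin lines in each plane).} Fix $\pi \in \Pi$ and set $r_\pi = |P \cap \pi| \ge r$. Applying Theorem~\ref{th:sz-t} inside $\pi$ bounds the number of lines of $\pi$ containing more than $C$ points of $P$ by $O(r_\pi^2/C^3 + r_\pi/C)$. Combining this with Beck's theorem (a corollary of Theorem~\ref{th:sz-t} using the $\alpha$-degeneracy of $\pi$), which guarantees that $P\cap\pi$ spans $\Omega_\alpha(r_\pi^2)$ lines in $\pi$, and choosing $C = C_\alpha$ a sufficiently large constant depending on $\alpha$, I obtain a set of $\Omega_\alpha(r_\pi^2)$ \emph{thin lines} of $\pi$, each incident to between $2$ and $C_\alpha$ points of $P$.

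\emph{Step 2 (Good pairs via reverse Markov).} For $\pi \in \Pi$ and $s \in P \cap \pi$, let $f(s,\pi)$ be the number of thin lines of $\pi$ through $s$. Double-counting incidences yields $\sum_{s \in P \cap \pi} f(s,\pi) = \sum_{\ell \text{ thin in } \pi} |P \cap \ell| \ge 2\,\Omega_\alpha(r_\pi^2)$. Since $f(s,\pi) \le r_\pi - 1$, a reverse Markov argument produces at least $\Omega_\alpha(r_\pi) \ge \Omega_\alpha(r)$ points $s \in P\cap\pi$ with $f(s,\pi) \ge \Omega_\alpha(r)$. I call such $(s,\pi)$ a \emph{good pair}; the total number of good pairs is at least $|\Pi| \cdot \Omega_\alpha(r)$.

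\emph{Step 3 (Projection and Szemer\'edi--Trotter).} Fix $s \in P$ and project $\mathbb{R}^3 \setminus \{s\}$ radially from $s$ onto $\mathbb{RP}^2$. Each plane $\pi$ through $s$ projects to a line $L_\pi \subset \mathbb{RP}^2$, and distinct planes through $s$ give distinct projected lines. If $(s,\pi)$ is a good pair, then the $\Omega_\alpha(r)$ thin lines of $\pi$ through $s$ correspond to $\Omega_\alpha(r)$ distinct directions in $\pi$, and hence to $\Omega_\alpha(r)$ distinct points of the projected image on $L_\pi$. Applying Theorem~\ref{th:sz-t} to the at most $n-1$ distinct projected points, the number of lines of $\mathbb{RP}^2$ incident to $\Omega_\alpha(r)$ of them is $O_\alpha(n^2/r^3 + n/r)$, which bounds the number of planes $\pi \in \Pi$ for which $(s,\pi)$ is good. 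Summing over $s \in P$ and comparing with Step~2,
\[
|\Pi| \cdot \Omega_\alpha(r) \le n \cdot O_\alpha\!\left(\frac{n^2}{r^3}+\frac{n}{r}\right),
\]
yielding $|\Pi| = O_\alpha(n^3/r^4 + n^2/r^2)$. The main obstacle is Step~1: the $\alpha$-dependent constant in Beck's lower bound degenerates as $\alpha \to 1$, and this $\alpha$-dependence propagates through every subsequent $\Omega_\alpha$ and $O_\alpha$ in the argument.
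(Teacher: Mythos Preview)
The paper does not supply its own proof of Theorem~\ref{th:et-2}; it is quoted as a result of Elekes and T\'oth and used as the base case for the paper's new Theorem~\ref{th:et-gen}. Your argument is correct and is, in fact, essentially the original Elekes--T\'oth proof: Step~1 shows that every $\alpha$-degenerate $r$-rich plane is $\gamma$-saturated for some $\gamma=\gamma(\alpha)>0$ (in the terminology the paper recalls just after Theorem~\ref{th:do}), and Steps~2--3 reprove the Elekes--T\'oth bound on $r$-rich $\gamma$-saturated planes via projection from each point and Szemer\'edi--Trotter. One small remark: you phrase Step~3 in $\mathbb{R}^3$, but the planes may sit in $\mathbb{R}^d$; the projection from $s$ then lands in $\mathbb{P}^{d-1}$, where Szemer\'edi--Trotter for points and lines still holds, so nothing changes.

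If you want to compare with the machinery the paper itself develops, note that for $k=2$ a plane is $\alpha$-degenerate if and only if it is essentially-$\alpha$-degenerate (essential dimension $\le 1$ forces the points onto a single line), so Theorem~\ref{th:do} already gives Theorem~\ref{th:et-2} directly. The paper's contribution begins at $k\ge 3$, where these two notions diverge and the extra combinatorial work (Lemma~\ref{th:comb-bound}) is needed. Your direct argument is more elementary and self-contained for $k=2$, while the paper's route trades the projection trick for the essential-dimension framework, which is what allows the induction to higher $k$.
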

The subscript in the $O$-notation indicates that the implied constant depends on those parameters listed in the subscript.

Elekes and T\'oth generalized Theorem \ref{th:et-2} to higher dimensions in the following, weaker form.
\begin{theorem}[Elekes, T\'oth]\label{th:et-k}
For each $k > 2$ there is a positive real constant $\beta_k$ such that, for any real $\alpha$ with $0 < \alpha < \beta_k$ and integer $r > k$, the number of $\alpha$-nondegenerate, $r$-rich $k$-flats determined by $P$ is bounded above by $O_{\alpha, k}(n^{k+1} r^{-k-2} + n^k r^{-k})$.
\end{theorem}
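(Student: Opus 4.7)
\bigskip
\noindent\textbf{Proof plan.}
I would prove Theorem~\ref{th:et-k} by induction on $k$, taking Theorem~\ref{th:et-2} as the base case $k=2$. For the inductive step, assume that for some $k\ge 3$ the theorem holds with constant $\beta_{k-1}$. Let $m$ denote the number of $\alpha$-degenerate, $r$-rich $k$-flats in $P$, and let $\beta_k$ be a sufficiently small constant (depending on $\beta_{k-1}$) to be fixed below.

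The idea is to pass from $k$-flats to $(k-1)$-flats by projecting from a well-chosen point, then invoke the inductive hypothesis. By double counting incidences between $P$ and the set of rich $k$-flats, there exists a point $p\in P$ lying on at least $mr/n$ of the rich $k$-flats. Let $\pi_p$ denote the projection from $p$ onto a generic hyperplane $H$. Each rich $k$-flat $\Gamma$ through $p$ maps to a $(k-1)$-flat $\Gamma'=\pi_p(\Gamma)\subset H$, and distinct $k$-flats through $p$ give distinct images. Set $P'=\pi_p(P\setminus\{p\})$; then $|P'|\le n-1$, and each $\Gamma'$ contains the image of $P\cap\Gamma\setminus\{p\}$.

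The central technical point is to verify that these projected $(k-1)$-flats satisfy the hypotheses of the inductive statement with parameters $r'=\Theta(r)$ and $\alpha'<\beta_{k-1}$. For degeneracy, note that any $(k-2)$-flat $\Delta'\subset\Gamma'$ pulls back to a $(k-1)$-flat $\Delta=\pi_p^{-1}(\Delta')\cap\Gamma$ in $\Gamma$ that contains $p$; since $\Gamma$ is $\alpha$-degenerate, $|P\cap\Delta|\le\alpha r$, so $|P'\cap\Delta'|\le\alpha r$ as well. Provided we can guarantee that $|P'\cap\Gamma'|\ge cr$ for an absolute constant $c=c(\alpha,k)>0$, the projected flat is $(\alpha/c)$-degenerate and $\lceil cr\rceil$-rich; choosing $\beta_k\le c\beta_{k-1}$ makes $\alpha/c<\beta_{k-1}$, and the inductive hypothesis gives $mr/n=O(n^k r^{-k-1}+n^{k-1}r^{-k+1})$, which rearranges to the desired bound on $m$.

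The step I expect to be the main obstacle is the lower bound $|P'\cap\Gamma'|\ge cr$; equivalently, showing that the points of $P\cap\Gamma\setminus\{p\}$ are not too concentrated on a few lines through $p$. For $k=2$ this is immediate from $\alpha$-degeneracy (which in that case bounds points on a line), but for $k\ge 3$ degeneracy controls only $(k-1)$-flats, so lines through $p$ are a priori unconstrained. I would handle this by choosing $p$ more carefully rather than via a single averaging step: for instance, by refining to a sub-collection of rich $k$-flats in which a positive fraction of the points in each flat are ``generic'' with respect to $p$, or by iterating the averaging with a bound on collinear triples inside each $\Gamma$ obtained from Beck's theorem applied to $P\cap\Gamma$ (whose hypothesis is furnished by $\alpha$-degeneracy together with the ambient $(k-1)$-flat bound). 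A small loss in the fraction of surviving flats is absorbed into the constant hidden in the $O_{\alpha,k}$ notation, which is permitted by the statement of the theorem.
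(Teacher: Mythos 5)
The obstacle you flag at the end is not a technicality to be cleaned up later; it is a genuine gap, and it is exactly the point where this projection-from-a-point scheme breaks. For $k\geq 3$, $\alpha$-degeneracy of a $k$-flat $\Gamma$ only guarantees that each line through $p$ carries at most $\alpha|P\cap\Gamma|$ points, so $P\cap\Gamma$ may lie on as few as roughly $(k-1)/\alpha$ lines through $p$ (a ``cone'' with apex $p$, with the line directions in general position: any hyperplane of $\Gamma$ through $p$ then contains at most $(k-1)$ of the lines, so the configuration really is $\alpha$-degenerate). For such a flat the projected image $\pi_p(P\cap\Gamma\setminus\{p\})$ has only $O_{\alpha,k}(1)$ distinct points, so $\Gamma'$ is not $\Omega(r)$-rich no matter how small you make $\beta_k$ — shrinking $\alpha$ only increases the number of lines, never their richness after collapse. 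The averaging step gives you no control over which point $p$ you land on, and the adversary can make every point of $P$ the apex of many such cones, so ``choosing $p$ more carefully'' is not a fix that follows from anything you have set up; Beck's theorem inside $\Gamma$ gives many spanned lines or hyperplanes but does not prevent the point mass of $P\cap\Gamma$ from sitting on few lines through the particular $p$ you were handed. There is also a circularity in the constants: you set $\beta_k\leq c\,\beta_{k-1}$ with $c=c(\alpha,k)$, but $\alpha$ is only constrained by $\alpha<\beta_k$, so $c$ cannot depend on $\alpha$ as written. If instead you keep multiplicities in $P'$ to preserve richness, the inductive hypothesis no longer applies, since Theorem~\ref{th:et-2} and the inductive statement are for point sets, and degeneracy with multiplicity can fail completely.

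For comparison: the paper does not prove Theorem~\ref{th:et-k} at all (it is quoted from Elekes and T\'oth, whose argument runs through ``$\gamma$-saturated'' flats and is the source of the constants $\beta_k<1$). The new result, Theorem~\ref{th:et-gen}, which removes the restriction $\alpha<\beta_k$, is proved by a different route that is instructive precisely on your sticking point: bad flats are those spanned by lower-dimensional rich $\alpha$-degenerate flats (Lemma~\ref{th:comb-bound}), and the projection used there is from a flat $\mathcal{R}$, with the projected points counted \emph{as a multiset}; richness is then preserved by fiat, and the loss of a Szemer\'edi--Trotter-type input is compensated by the elementary, multiplicity-robust count of Lemma~\ref{th:mult-bound} ($(1-\alpha)^{-k}n^{k+1}r^{-k-1}$ flats), which suffices because the projection step only needs a weaker bound. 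So the collapse-of-richness problem you identified is real, and the known ways around it (saturation \`a la Elekes--T\'oth, or multiset counting as in this paper) replace exactly the step your plan leaves open.
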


Elekes and T\'oth remarked that their argument can not be improved to replace the constants $\beta_k$ with $1$ for $k>2$ in Theorem \ref{th:et-k}.
Afshani, Berglin, van Duijn, and Nielsen \cite{afshani2016applications} used Theorem \ref{th:et-2} for an algorithm to determine the minimum number of $2$-dimensional planes needed to cover a set of points in $\mathbb{R}^3$, with a running time that depends on the required number of planes.
They mention that one of the obstacles to generalizing their algorithm to higher dimensions is the lack of a full generalization of Theorem \ref{th:et-2}.

The contribution of this paper is the following strong generalization of Theorem \ref{th:et-2}, which removes this limitation of Theoerem \ref{th:et-k}.

\begin{theorem}\label{th:et-gen}
For each integer $k \geq 1$, real $\alpha$ with $0 < \alpha < 1$, and integer $r > k$, the number of $\alpha$-nondegenerate, $r$-rich $k$-flats is bounded above by $O_{\alpha, k}(n^{k+1} r^{-k-2} + n^k r^{-k})$.
\end{theorem}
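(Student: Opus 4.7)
The approach is induction on $k$, with Theorem~\ref{th:et-2} as the base case $k=2$. Assuming the theorem holds for $(k-1)$-flats with any $\alpha' < 1$, I want to deduce it for $k$-flats via a double-counting reduction.

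The reduction works as follows. Suppose we can show that for each $\alpha$-degenerate $r$-rich $k$-flat $\Gamma$, there exists a $(k-1)$-subflat $\Pi \subset \Gamma$ that is $\alpha'$-degenerate (for some $\alpha' = \alpha'(\alpha,k) < 1$) and $cr$-rich (for some $c = c(\alpha,k) > 0$). Then $\Pi$ together with any $p \in P \cap \Gamma \setminus \Pi$ uniquely determines $\Gamma$, and there are at least $(1-\alpha)r$ such choices of $p$. Double counting pairs $(\Pi,p)$ over all $\alpha$-degenerate $r$-rich $k$-flats gives
\[
m \cdot (1-\alpha)\, r \;\le\; n \cdot N_{k-1}(cr,\alpha'),
\]
where $N_{k-1}(s,\beta)$ denotes the number of $\beta$-degenerate $s$-rich $(k-1)$-flats. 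Substituting the inductive bound $N_{k-1}(cr,\alpha') = O(n^{k}/r^{k+1} + n^{k-1}/r^{k-1})$ yields the target bound $m = O(n^{k+1}/r^{k+2} + n^{k}/r^{k})$.

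The main obstacle is establishing the existence of such a subflat $\Pi$. The existence claim is not true in full generality: if $P \cap \Gamma$ is in affine general position within $\Gamma$, then no $(k-1)$-subflat of $\Gamma$ is $cr$-rich for constant $c$. However, in that scenario $\Gamma$ is itself $o(1)$-degenerate, with degeneracy parameter far below Elekes and T\'oth's threshold $\beta_k$, so Theorem~\ref{th:et-k} applies directly and already yields the target bound. This suggests a dichotomy on the maximum richness of $(k-1)$-subflats of $\Gamma$: if this maximum is below some threshold $\theta r$ with $\theta < \beta_k$, then $\Gamma$ is $\theta$-degenerate and Theorem~\ref{th:et-k} bounds the number of such $\Gamma$; otherwise a rich $(k-1)$-subflat $\Pi$ is available, and we use it in the double count.

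A secondary difficulty: even when a $\theta r$-rich $(k-1)$-subflat $\Pi \subset \Gamma$ exists, it may not be $\alpha'$-degenerate for any $\alpha' < 1$, because some $(k-2)$-subflat of $\Pi$ could contain almost all of $P \cap \Pi$. I would handle this by iterating the dichotomy: if $\Pi$ is too degenerate, pass to an even richer $(k-2)$-subflat of $\Pi$, and continue. The recursion terminates either when it reaches a suitably non-degenerate subflat (at which point the appropriate inductive hypothesis applies) or when it reaches dimension $2$, where Theorem~\ref{th:et-2} applies with its full $\alpha < 1$ range. Carefully tracking how the richness and degeneracy parameters evolve through this descent, and verifying that the bounds produced at every termination dimension aggregate to the target, is the technical heart of the argument.
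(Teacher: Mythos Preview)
Your approach is sound and genuinely different from the paper's. The paper does not use Theorem~\ref{th:et-k} at all; instead, it introduces the stronger notion of an \emph{essentially}-$\alpha$-degenerate flat (no subset of $P\cap\Lambda$ of essential dimension at most $k-1$ contains more than an $\alpha$-fraction of the points) and invokes Do's theorem (Theorem~\ref{th:do}) to handle the essentially-$\alpha'$-degenerate case. For an $\alpha$-degenerate flat $\Lambda$ that is \emph{not} essentially-$\alpha'$-degenerate, the paper extracts a \emph{collection} $\mathcal{G}$ of $\alpha'$-degenerate $\Omega(r)$-rich subflats of total dimension $<k$ that together span $\Lambda$, and then bounds the number of such $\Lambda$ via a projection-and-induction argument (Lemma~\ref{th:comb-bound}). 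Your dichotomy is different: you split on whether $\Lambda$ is $\theta$-degenerate for some fixed $\theta<\beta_k$ (handled directly by Theorem~\ref{th:et-k}), and in the complementary case you locate a \emph{single} $\alpha'$-degenerate rich $j$-subflat by iterated descent and double count with $k-j$ extending points. Your argument is more elementary in that it bypasses the essential-dimension machinery and Do's theorem, relying only on Theorems~\ref{th:et-2} and~\ref{th:et-k} as black boxes; the paper's route, in turn, avoids any appeal to the $\beta_k$ threshold and yields the auxiliary Lemma~\ref{th:comb-bound} along the way. The parameter tracking you flag as the technical heart does work out: the descent loses only a fixed constant factor in richness per step and terminates by dimension~$1$ (a line with $\Omega_{\alpha,k}(r)$ points is automatically $\alpha'$-degenerate once $r$ is large), and at termination level $j$ the double count gives $m_j = O\bigl((n^{j+1}r^{-j-2}+n^jr^{-j})(n/r)^{k-j}\bigr)$, which is within the target for every $j$.
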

The case $k=1$ of this theorem is Theorem \ref{th:sz-t}, and the case $k=2$ is Theorem \ref{th:et-2}.
For $k>2$, it is new.

One well-known application of an incidence bound between points and lines is Beck's theorem \cite{beck1983lattice}.
Proving a conjecture of Erd\H{o}s \cite{erdos1975some}, Beck used a slightly weaker incidence bound than Theorem \ref{th:sz-t} to show that, if $P$ is a set of $n$ points such that no more than $s$ points of $P$ lie on any single line, then the number of lines spanned by $P$ is $\Omega(n(n-s))$.
In the same paper, Beck gave the following bound for flats of higher dimensions.
\begin{theorem}[Beck]\label{th:beck}
For each $k \geq 1$, there is are constants $c_k$ and $c_k'$ such that either $c_k n$ points of $P$ are contained in a single $k$-flat, or $P$ spans $c'_k n^{k+1}$ $k$-flats.
\end{theorem}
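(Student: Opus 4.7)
The plan is to proceed by induction on $k$. The base case $k=1$ is Beck's original theorem for lines, which follows from the Szemer\'edi--Trotter theorem. For the inductive step, fix $k\geq 2$, assume the theorem for all smaller dimensions, and let $P$ be a set of $n$ points in $\mathbb{R}^d$. I choose a small constant $c_k>0$ (depending on $c_{k-1}$ and on the constants of Theorem~\ref{th:et-gen}) and assume that no $k$-flat contains more than $c_k n$ points of $P$; the task is to show $P$ spans $\Omega(n^{k+1})$ distinct $k$-flats.

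The key reduction is that, under this assumption, no $j$-flat with $j\leq k-1$ contains more than $c_k n$ points: any such $j$-flat is contained in some $(k-1)$-flat with at least as many points, and extending that $(k-1)$-flat by any outside point (or, if $P$ were contained in it, by taking any $k$-flat through it) would produce a $k$-flat exceeding the hypothesis. With $c_k \leq c_{k-1}$, the inductive hypothesis in dimension $k-1$ then produces $\Omega(n^k)$ spanned $(k-1)$-flats. The same bound also controls dependent $(k+1)$-tuples: for any ordered $k$-tuple $(p_1,\dots,p_k)$, its affine span is a $j$-flat with $j\leq k-1$ containing at most $c_k n$ points, so the number of $p_{k+1}$ making the tuple dependent is at most $c_k n$; summing over the $n^k$ choices of $(p_1,\dots,p_k)$ bounds the number of affinely dependent ordered $(k+1)$-tuples by $c_k n^{k+1}$, so for $c_k$ small the number of affinely independent ordered $(k+1)$-tuples of $P$ is $\Omega(n^{k+1})$.

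Each affinely independent $(k+1)$-tuple spans a unique $k$-flat, and a flat with $r$ points of $P$ contains at most $r(r-1)\cdots(r-k)=O(r^{k+1})$ such tuples. Writing $f_r$ for the number of spanned $k$-flats with exactly $r$ points, this yields
\[
\Omega(n^{k+1}) \ \leq\ \sum_r f_r\cdot r^{k+1}.
\]
I split the sum at a constant threshold $r_0$. For $r>r_0$, I decompose the flats into those that are $\alpha$-degenerate (for a fixed $\alpha<1$) and those that are not. Theorem~\ref{th:et-gen} bounds the $\alpha$-degenerate $r$-rich flats, and a dyadic summation gives
\[
\sum_{r>r_0,\ \alpha\text{-deg}} f_r\,r^{k+1} \ =\ O\!\left(\frac{n^{k+1}}{r_0}+c_k n^{k+1}\right),
\]
which is at most $\tfrac{1}{4}\Omega(n^{k+1})$ for $r_0$ a large enough constant and $c_k$ small enough. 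The non-$\alpha$-degenerate flats are controlled by pairing each such $\Gamma$ with a $(k-1)$-subflat $\Sigma$ containing at least $\alpha r_\Gamma$ points and invoking an inductive Szemer\'edi--Trotter-type bound on rich $(k-1)$-flats (combining Theorem~\ref{th:et-gen} in dimension $k-1$ with the inductive form of the current theorem). After this, the total contribution from $r>r_0$ is $\leq\tfrac12\Omega(n^{k+1})$, so $\sum_{r\leq r_0}f_r\,r^{k+1}\geq \tfrac12\Omega(n^{k+1})$, giving $\sum_{r\leq r_0}f_r\geq \Omega(n^{k+1}/r_0^{k+1})=\Omega(n^{k+1})$, as desired.

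The main obstacle is the treatment of non-$\alpha$-degenerate $k$-flats of intermediate richness: these are not directly governed by Theorem~\ref{th:et-gen}, so one must bound the number of $(k-1)$-flats witnessing the failure of degeneracy via a nested Szemer\'edi--Trotter-type inequality, bootstrapping the inductive form of Beck's theorem and Theorem~\ref{th:et-gen} in each dimension simultaneously. Care is needed to propagate the constants through this nested induction.
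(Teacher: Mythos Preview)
The paper does not contain a proof of Theorem~\ref{th:beck}: it is quoted as Beck's original result and attributed to \cite{beck1983lattice}. What the paper \emph{does} prove is the sharper Theorem~\ref{th:improvedBeck}, and that argument follows an entirely different route from your sketch. It invokes Theorem~\ref{thm:essentialDimBound} (the essential-dimension bound from \cite{lund2016essential}) to say that if $f_k=o(n^{k+1})$ then $(1-o(1))n$ points of $P$ lie in a family $\mathcal G$ of flats with $\sum_{\Gamma\in\mathcal G}\dim\Gamma\le k$, and then shows by a short combinatorial partition of $\mathcal G$ into two pieces that one of the pieces spans a $(k-1)$-flat carrying at least half the points. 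There is no induction on $k$, no appeal to Theorem~\ref{th:et-gen}, and no counting of independent $(k{+}1)$-tuples; the heavy lifting is outsourced to the essential-dimension theorem.

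Your outline is closer in spirit to Beck's original induction, but there is a genuine gap exactly where you flag it: the non-$\alpha$-degenerate $r$-rich $k$-flats. Pairing each such $\Gamma$ with a heavy $(k-1)$-subflat $\Sigma$ does not by itself control $\sum r_\Gamma^{\,k+1}$. A single $(k-1)$-flat $\Sigma$ with $s$ points can sit inside as many as $n-s$ spanned $k$-flats through it, each potentially contributing up to $(s/\alpha)^{k+1}$, so one would need a bound on $\sum_\Sigma s_\Sigma^{\,k+1}$ over rich $(k-1)$-flats $\Sigma$. But that is precisely an \emph{unrestricted} rich-flat bound, and Theorem~\ref{th:et-gen} only governs $\alpha$-degenerate flats; the ``nested Szemer\'edi--Trotter'' bootstrap you describe therefore faces the same degenerate/non-degenerate dichotomy one level down, and the proposal does not show how the recursion terminates with acceptable constants. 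As written, this step is the missing idea rather than a routine detail.
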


How large can the constant $c_k$ be in Theorem \ref{th:beck}?
For $k=1$, Beck showed that Theorem \ref{th:beck} holds for any $c_1 < 1$.
For $k=2$, if $P$ is a set of $n$ points of which $n/2$ lie on each of two skew lines, then $P$ spans $n$ planes, but no plane contains more than $n/2 + 1$ points of $P$.
Hence, Theorem \ref{th:beck} does not hold for $c_2 = 1/2$.
Beck's proof yields a constant $c_k$ of the form $c_k = e^{-ck}$ for some real $c>0$.
Do \cite{do2016extending} improved this by showing that Theorem \ref{th:beck} holds for any $c_k < 1/k$.

The second result of this paper is the following improvement to Theorem \ref{th:beck}.

\begin{theorem}\label{th:improvedBeck}
	For each integer $k \geq 1$, at least one of the following holds:
	\begin{enumerate}
		\item at least $(1-o(1))n$ points of $P$ are contained in a single $k$-flat, or
		\item at least $(\frac{1}{2}-o(1))n$ points of $P$ are contained in a single $(k-1)$-flat, or
		\item $k$ is odd and $(1-o(1))n $ points of $P$ are contained in the union of $k$ lines, or
		\item  $P$ spans $\Omega_k(n^{k+1})$ $k$-flats.
	\end{enumerate}
\end{theorem}

An immediate corollary of Theorem \ref{th:improvedBeck} is that, for any $c_k < 1/2$, there is a constant $c_k'$ such that Theorem \ref{th:beck} holds with for these choices of $c_k$ and $c_k'$.
Indeed, for odd $k$, any set of $(k+1)/2$ lines are contained in some $k$-flat.
Hence, if the third alternative in Theorem \ref{th:improvedBeck} holds, a simple averaging argument shows that some $k$-flat contains at least $(1-o(1)) ((k+1)/2) (n/k) > (1-o(1)) n/2$ points of $P$.

As noted above, we cannot take $c_2 = 1/2$ in Theorem \ref{th:beck}.
In fact, we can not take $c_k = 1/2$ for any $k>1$.
To see this, suppose that $P$ is contained in the union of a $(k-1)$-flat $\Gamma$ and a line $\ell$, with each of $\Gamma$ and $\ell$ containing $n/2$ points.
In this case, any $k$-flat spanned by $P$ contains either $\Gamma$ or $\ell$, and so $P$ spans at most $n/2 + \binom{n/2}{k-1} = O(n^{k-1})$ $k$-flats.

We remark that all of the new results in this paper hold for point sets in complex space, using the generalization of the Szemer\'edi-Trotter bound to complex space proved by T\'oth \cite{toth2015szemeredi} and Zahl \cite{zahl2012szemeredi}.

\section{Projective geometry and essential dimension}

In this section, we fix notation and review some basic facts of projective geometry, as well as results and definitions we need from \cite{lund2016essential}.
For convenience, we work in the $d$-dimensional real projective space $\mathbb{P}^d(\mathbb{R})$ instead of real affine space.
This does not affect any of the results.

The span of a set $X $ is the smallest flat that contains $X$, and is denoted $\overline{X}$.
We denote by $\overline{\Lambda,\Gamma}$ the span of $\Lambda \cup \Gamma$.
It is a basic fact of projective geometry that, for any flats $\Lambda, \Gamma$,
\begin{equation}\label{eqn:dimSpan} \dim(\overline{\Lambda,\Gamma})  + \dim(\Lambda \cap \Gamma) = \dim(\Lambda) + \dim(\Gamma).\end{equation}
More generally, using the fact that $\dim(\Lambda \cap \Gamma) \geq -1$, we have for any set $\mathcal{H}$ of flats that
\begin{equation}\label{eqn:dimSpanManyFlats}\dim \overline{\mathcal{H}} \leq |\mathcal{H}| - 1 +\sum_{\Lambda \in \mathcal{H}} \dim(\Lambda).\end{equation}

For any $k$-flat $\Lambda$ in $\mathbb{P}^d(\mathbb{R})$, the $(k+1)$-flats that contain $\Lambda$ correspond to the points of $\mathbb{P}^{d-k-1}(\mathbb{R})$.
The {\em projection from $\Lambda$} is the map $\pi_\Lambda: \mathbb{P}^d(\mathbb{R}) \setminus \Lambda \rightarrow \mathbb{P}^{d-k-1}(\mathbb{R})$ that sends a point $p$ to the $k+1$ flat $\overline{p,\Lambda}$.

Defined in \cite{lund2016essential}, the {\em essential dimension} $K=K(P)$ of $P$ is the minimum $t$ such that there exists a set $\mathcal{G}$ of flats such that
\begin{enumerate}
\item $P$ is contained in the union of the flats of $\mathcal{G}$,
\item each flat $\Lambda \in \mathcal{G}$ has dimension $\dim(\Lambda) \geq 1$, and
\item $\sum_{\Lambda \in \mathcal{G}} \dim(\Lambda) = t$.
\end{enumerate}

Denote by $f_k$ the number of $k$-flats spanned by $P$.
For each non-negative integer $i$, let $g_i$ be the largest number of points of $P$ contained in a set of essential dimension $i$.
For example, $g_1$ is the largest number of points contained in any single line, and $g_2$ is the largest number of points contained in any single plane, or the union of any pair of lines.
Note that $1=g_0 \leq g_1 \leq \ldots \leq g_K = n$.
A classical theorem of Beck \cite{beck1983lattice} is that $f_1 = n(n-g_1)$.
This was generalized to all dimensions in \cite{lund2016essential}, and this generalization is the main tool used here.
\begin{theorem}\label{thm:essentialDimBound}
For each $k$, there is a constant $c_k$ such that, if $n-g_k > c_k$, then
$$f_k = \Theta_k \left(\prod_{i=0}^k (n-g_i) \right).$$
If $n-g_k = 0$ ({i.e.} $k \geq K$), then
$$f_k = O_K\left(\prod_{i=0}^{2(K-1) - k} (n-g_i)\right),$$
and either $f_{k-1} = f_k = 0$ or $f_{k-1} > f_k$.
\end{theorem}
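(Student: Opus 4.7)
The plan is to handle the two regimes separately. For the upper bound when $K > k$, I would count ordered $(k+1)$-tuples of affinely independent points $(p_0, \ldots, p_k)$ in $P$, noting that each $p_i$ must avoid the span $\overline{p_0, \ldots, p_{i-1}}$, which contains at most $g_{i-1}$ points. Since each spanned $k$-flat contributes $\Theta_k(1)$ such tuples, a routine rearrangement of factors gives $O(\prod_{i=0}^k(n-g_i))$. For the matching lower bound I would induct on $k$, with Beck's theorem handling the base case $k=1$. For the inductive step I would project from a generic $p \in P$ via $\pi_{\{p\}}$, so that spanned $k$-flats through $p$ biject with spanned $(k-1)$-flats in the image of $\pi_{\{p\}}(P \setminus \{p\})$; applying the inductive hypothesis to the projection and averaging over the choice of $p$ would recover the desired lower bound.

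I expect the main obstacle to be controlling essential dimension and the quantities $g_i$ under this projection. Essential dimension is defined as a minimum over coverings, so it need not behave monotonically under $\pi_{\{p\}}$; a structural characterization of the minimizing cover from \cite{lund2016essential} should be used to argue that for a generic $p$ the essential dimension drops by exactly one and that the projected maxima $g'_i$ are comparable quantitatively to the original $g_i$.

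For the regime $k \geq K$, I would work with a minimal cover $\mathcal{G}$ realizing the essential dimension. Every spanned $k$-flat $\Gamma$ is the span of the nonempty intersections $P \cap \Lambda \cap \Gamma$ for $\Lambda \in \mathcal{G}$, and \eqref{eqn:dimSpanManyFlats} forces $\sum_{\Lambda} \dim(\Lambda \cap \Gamma) \geq k - |\mathcal{G}| + 1$. Since $\sum_\Lambda \dim(\Lambda) = K$ and $|\mathcal{G}| \leq K$, the dimensions of the intersections $\Lambda \cap \Gamma$ are tightly constrained. Counting spanned flats inside each $\Lambda$ by the first regime's bound applied to $P \cap \Lambda$, and summing over compatible dimension profiles, yields $O(\prod_{i=0}^{2(K-1)-k}(n-g_i))$.

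Finally, for the strict monotonicity $f_{k-1} > f_k$ in this regime: every spanned $k$-flat $\Gamma$ contains at least one spanned $(k-1)$-subflat, namely the span of any $k$ of $k+1$ independent spanning points, giving a natural correspondence from spanned $k$-flats to spanned $(k-1)$-flats. A careful tracking shows this correspondence is injective on all but a controlled set of exceptions, and one then exhibits a spanned $(k-1)$-flat arising from a single component $\Lambda \in \mathcal{G}$ which, by minimality of the cover, cannot be extended to any spanned $k$-flat, producing the strict inequality.
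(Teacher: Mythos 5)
This statement is not proved in the paper at all: it is quoted verbatim from \cite{lund2016essential}, so there is no in-paper argument to compare your sketch against, and it has to be judged on its own terms. Judged that way, it has genuine gaps at exactly the hard points. Your first-regime upper bound via tuple counting is logically reversed: knowing that $\overline{p_0,\ldots,p_{i-1}}$ contains at most $g_{i-1}$ points of $P$ gives a \emph{lower} bound of $n-g_{i-1}$ on the number of admissible choices of $p_i$, not an upper bound, so this count only shows that the number of affinely independent $(k+1)$-tuples is at most $n^{k+1}$, which can vastly exceed $\prod_{i=0}^{k}(n-g_i)$ (for instance when almost all of $P$ lies in a hyperplane). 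Moreover, a spanned $k$-flat containing $m$ points carries on the order of $m^{k+1}$ such tuples, with $m$ possibly as large as $g_k$, so the claim that each spanned $k$-flat contributes $\Theta_k(1)$ tuples is false, and no ``routine rearrangement of factors'' can convert this count into the bound $O\left(\prod_{i=0}^{k}(n-g_i)\right)$.

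The lower bound by projecting from a generic $p$ and averaging loses the crucial factor as well: summing over $p\in P$ the number of spanned $k$-flats through $p$ counts each spanned $k$-flat $\Lambda$ once for every point of $P\cap\Lambda$, i.e.\ up to $g_k$ times, so even granting that the projected quantities satisfy $g'_i\leq g_{i+1}$, the induction only yields $f_k=\Omega\left(\frac{n}{g_k}\prod_{i=1}^{k}(n-g_i)\right)$, which falls short of $\Omega\left(\prod_{i=0}^{k}(n-g_i)\right)$ precisely when $g_k$ is large --- the regime the theorem is designed for; and the assertions that the essential dimension drops by exactly one under a generic projection and that $g'_i$ is comparable to $g_i$ are hoped for rather than established. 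In the regime $k\geq K$ you never derive the specific exponent $2(K-1)-k$, and the strict inequality $f_{k-1}>f_k$ rests on a correspondence (``span of any $k$ of the $k+1$ spanning points'') that is neither canonical nor close to injective, since many distinct spanned $k$-flats can contain the same spanned $(k-1)$-flat; the claim that it is ``injective up to a controlled set of exceptions'' is exactly the content that would need to be proved. So the proposal assembles plausible ingredients, but each of the four claimed steps either fails as stated or omits the decisive argument.
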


\section{Proof of Theorem \ref{th:et-gen}}

Recall from the introduction that a $k$-flat $\Lambda$ is {\em $\alpha$-nondegenerate} if at most $\alpha |P \cap \Lambda|$ points of $P$ lie on any $(k-1)$-flat contained in $\Lambda$.
We further say that $\Lambda$ is {\em essentially-$\alpha$-nondegenerate} if for each $P' \subset P \cap \Lambda$ such that the essential dimension of $P'$ is at most $k-1$, we have $|P'| \leq \alpha |P \cap \Lambda|$.
Note that an essentially-$\alpha$-nondegenerate flat is also $\alpha$-nondegenerate, but not necessarily the other way around.

The following bound on essentially-$\alpha$-nondegenerate flats was proved by Do \cite{do2016extending}.
\begin{theorem}[Do]\label{th:do}
For any integer $k \geq 1$, any real $\alpha$ with $0 < \alpha < 1$, and any integer $r > k$, the number of essentially-$\alpha$-nondegenerate, $r$-rich $k$-flats is bounded above by $O_{\alpha,k}(n^{k+1}r^{-k-2} + n^kr^{-k})$.
\end{theorem}

Theorem \ref{th:do} is also an immediate consequence of Theorem \ref{thm:essentialDimBound} together with the following theorem of Elekes and T\'oth \cite{elekes2005incidences}.
A $k$-flat $\Lambda$ is {\em $\gamma$-saturated} if $\Lambda \cap P$ spans at least $\gamma |\Lambda \cap P|^k$ different $(k-1)$-flats.
\begin{theorem}[Elekes, T\'oth]\label{th:elktot}
For any positive real $\gamma$, any integer $k \geq 1$, and any integer $r > k$, the number of $r$-rich $\gamma$-saturated $k$-flats is at most $O_{\gamma,k}(n^{k+1}r^{-k-2} + n^k r^{-k})$.
\end{theorem}

To prove Theorem \ref{th:do} from Theorem \ref{thm:essentialDimBound} and Theorem \ref{th:elktot}, observe that Theorem \ref{thm:essentialDimBound} implies that essentially-$\alpha$-nondegenerate $k$-flats are $\gamma$ saturated, for an appropriate choice of $\gamma$ depending on $\alpha$ and $k$.

In the remainder of this section, we deduce Theorem \ref{th:et-gen} from Theorem \ref{th:do}.

\subsection{The case $k=3$}
The case $k=3$ admits a simpler proof than the general theorem, which we give first.
The proof for arbitrary $k$ does not depend on this special case, but is built around a similar idea.

\begin{theorem}\label{th:et-3}
For any real $\alpha$ with $0 < \alpha < 1$ and integer $r > 3$, the number of $\alpha$-nondegenerate, $r$-rich $3$-flats is bounded above by $O_\alpha(n^{4}r^{-5} + n^3 r^{-3})$.
\end{theorem}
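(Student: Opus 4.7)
The plan is to split the set of $\alpha$-degenerate $r$-rich $3$-flats into two classes and bound each separately. Setting $\alpha^* := (1+\alpha)/2 < 1$, the first class consists of flats that happen to be essentially-$\alpha^*$-degenerate; these are controlled directly by Theorem \ref{th:do}, which already yields $O(n^{4} r^{-5} + n^{3} r^{-3})$. The whole task is therefore to show that the second class---$3$-flats that are $\alpha$-degenerate but not essentially-$\alpha^*$-degenerate---is structurally restricted enough to admit a cheaper count from the Szemer\'edi--Trotter theorem.

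For a flat $\Lambda$ in the second class, take a witness $P' \subset P \cap \Lambda$ with $|P'| > \alpha^* |P \cap \Lambda|$ and essential dimension at most $2$. The definition of essential dimension offers only three possible covering shapes: a single line, a single plane, or two lines. The first two are ruled out by $\alpha$-degeneracy, since a line sits in a plane and any plane in $\Lambda$ holds at most $\alpha|P\cap\Lambda| < \alpha^*|P\cap\Lambda|$ points. Hence $P'$ lies on two lines $\ell_1, \ell_2 \subset \Lambda$, and these lines must be skew, because a coplanar pair would put $P'$ inside a common plane, again contradicting $\alpha$-degeneracy. By (\ref{eqn:dimSpan}) the skew pair spans all of $\Lambda$, and since $\ell_1,\ell_2$ are disjoint,
\[
|P \cap \ell_1| + |P \cap \ell_2| \;\geq\; |P'| \;>\; \alpha^* |P\cap\Lambda|,
\]
while each summand is at most $\alpha|P\cap\Lambda|$; together these force $|P \cap \ell_i| \geq (\alpha^* - \alpha)|P\cap\Lambda| = \tfrac{1-\alpha}{2}|P\cap\Lambda| \geq \beta r$, where $\beta := (1-\alpha)/2$.

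With this structural lemma in hand, every second-class $3$-flat is determined by an unordered pair of skew $\beta r$-rich lines. Theorem \ref{th:sz-t} gives $O_\alpha(n^2 r^{-3} + n r^{-1})$ lines of that richness, hence $O_\alpha(n^4 r^{-6} + n^2 r^{-2})$ candidate pairs, and this is absorbed by the target $O(n^4 r^{-5} + n^3 r^{-3})$ as soon as $r \geq 1$. Summing the two class bounds finishes the proof.

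The main obstacle I anticipate is the structural step: pinning down that the \emph{only} way to defeat essential-$\alpha^*$-degeneracy while preserving $\alpha$-degeneracy is via two skew lines each carrying $\Omega_\alpha(r)$ points. This is tight for $k=3$ precisely because the list of essential-dimension-$2$ configurations is short (one line, one plane, two lines); for arbitrary $k$ the analogous reduction has to contend with a growing list of low-essential-dimension covers, which is presumably why the general case needs a more intricate argument rather than this one-shot case analysis.
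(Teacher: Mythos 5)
Your proof is correct and takes essentially the same route as the paper's: split off the essentially-degenerate flats via Theorem \ref{th:do}, then show each remaining $\alpha$-degenerate flat is spanned by two skew $\Omega_\alpha(r)$-rich lines and count such pairs with Theorem \ref{th:sz-t}. The only cosmetic differences are your intermediate threshold $(1+\alpha)/2$ in place of the paper's $\alpha^{1/2}$ and your more explicit case analysis of the essential-dimension-$2$ covers.
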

\begin{proof}
By Theorem \ref{th:do}, the number of essentially-$\alpha^{1/2}$-nondegenerate $r$-rich $3$-flats is bounded above by $O_\alpha(n^{4}r^{-5} + n^3 r^{-3})$.
If an $r$-rich $3$-flat $\Lambda$ is $\alpha$-nondegenerate but not essentially-$\alpha^{1/2}$-nondegenerate, then at least $\alpha^{1/2} |P \cap \Lambda| \geq \alpha^{1/2}r$ points of $P$ are contained in the union of two skew lines, neither of which contains more than $\alpha |P \cap \Lambda|$ points of $P$; hence, each of these lines contains at least $(\alpha^{1/2} - \alpha)r$ points.
By the Szemer\'edi-Trotter theorem (Theorem \ref{th:sz-t}), the maximum number of pairs of $((\alpha^{1/2} - \alpha)r)$-rich lines is bounded above by $O_\alpha(n^4 r^{-6} + n^2 r^{-2})$, which implies the conclusion of the theorem.
\end{proof}

\subsection{The general case}

The proof of Theorem \ref{th:et-3} given above does not generalize to higher dimensions, but the basic approach of bounding the number of $r$-rich $\alpha$-nondegenerate flats that are not also essentially-$\alpha'$-nondegenerate (for a suitable choice of $\alpha'$) does still work in higher dimensions.

It turns out that a distinguishing property of rich flats that are $\alpha$-nondegenerate but not essentially-$\alpha'$-nondegenerate is that they are \textit{special}, according to the following definition.
If a $k$-flat $\Lambda$ is $(r,\alpha)$-\textit{special}, then $\Lambda$ contains a set $\mathcal{G}$ of flats so that
\begin{enumerate}
	\item $\overline{\mathcal{G}} = \Lambda$,
	\item for each $G' \subseteq G$ with $|G'| > 1$, we have $\sum_{\Gamma \in \mathcal{G'}} \dim(\Gamma) < \dim(\overline{\mathcal{G'}})$, and
	\item each flat of $\mathcal{G}$ is $r$-rich and $\alpha$-nondegenerate.
\end{enumerate}

We first show that each rich flat that is $\alpha$-nondegenerate but not essentially-$\alpha'$-nondegenerate is special.

\begin{lemma}\label{th:specialFlats}
	Let $0 < \alpha < 1$, and let $r$ and $k$ be positive integers.
	If $\Lambda$ is an $r$-rich, $\alpha$-nondegenerate $k$-flat that is not also essentially-$\alpha'$-nondegenerate, then it is $(r',\alpha')$-special, for $\alpha' = (k + \alpha)(k + 1)^{-1}$ and $r'=(1-\alpha')|P \cap \Lambda|$.
\end{lemma}
\begin{proof}
	From the definition of essentially-$\alpha'$-nondegenerate, there is a collection $\mathcal{G}'$ of sub-flats of $\Lambda$ with $\sum_{\Gamma \in \mathcal{G}'} \dim(\Gamma) < k$ such that $|\bigcup_{\Gamma \in \mathcal{G}'} \Gamma \cap P| > \alpha' |P \cap \Lambda|$.
	We modify $\mathcal{G}'$ to obtain a set $\mathcal{G}$ satisfying the three conditions needed to show that $\Lambda$ is special, as follows.
	
	If $\Gamma \in \mathcal{G}'$ is not $\alpha'$-nondegenerate, then replace $\Gamma$ with the smallest flat $\Gamma' \subset \Gamma$ that contains at least $(\alpha')^{\dim(\Gamma) - \dim(\Gamma')}|P \cap \Gamma|$ points.
	Note that, since any flat $\Gamma'' \subset \Gamma'$ with $\dim(\Gamma'') = \dim(\Gamma') - 1$ contains fewer than $(\alpha')^{\dim(\Gamma) - \dim(\Gamma') + 1}|P \cap \Gamma| \leq \alpha'|P \cap \Gamma'|$ points of $P$, it follows that $\Gamma'$ is $\alpha'$-nondegenerate.
	Furthermore, the number of points in $\Gamma$ that are not also in $\Gamma'$ is at most $(1-(\alpha')^{\dim(\Gamma) - \dim(\Gamma')}) |P \cap \Gamma|$.
	
	Since $0 < \alpha' < 1$, we have for any integer $j \geq 1$ that 
	\[1-(\alpha')^j = \alpha'(1 - (\alpha')^{j-1}) + 1 - \alpha' \leq 1 - (\alpha')^{j-1} + 1 - \alpha'.\] 
	Hence by induction, $1 - (\alpha')^j \leq j(1-\alpha')$.
	Consequently, the number of points in $\Gamma$ that are not also in $\Gamma'$ is at most $ (\dim(\Gamma) - \dim(\Gamma'))(1-\alpha')|P \cap \Lambda|$.
	
	For any subset $\mathcal{G}'' \subset \mathcal{G}'$ such that $\sum_{\Gamma \in \mathcal{G''}} \dim(\Gamma) = \dim(\overline{\mathcal{G''}})$, replace $\mathcal{G}''$ with $\overline{\mathcal{G}''}$.
	Note that this does not increase $\sum_{\Gamma \in \mathcal{G'}} \dim(\Gamma)$.
	Repeat this procedure until the second condition in the definition of special holds.
	
	Remove from $\mathcal{G}'$ any flat that contains fewer than $(1-\alpha')|P \cap \Lambda|$ points to obtain the final set $\mathcal{G}$.
	Each remaining flat in $\mathcal{G}$ is $\alpha'$-nondegenerate and $(1-\alpha')|P \cap \Lambda|$-rich.
	The number of points that are contained in flats of $\mathcal{G}'$ but not in flats of $\mathcal{G}$ is at most $\sum_{\Gamma \in \mathcal{G}'} \dim(\Gamma) (1-\alpha')|P \cap \Lambda| < k (1-\alpha') |P \cap \Lambda|= (\alpha'-\alpha)|P \cap \Lambda|$ points.
	Hence, $|\bigcup_{\Gamma \in \mathcal{G}} \Gamma \cap P| > \alpha |P \cap \Lambda|$.
	If $\dim(\overline{\mathcal{G}}) < k$, then $\Lambda$ is not $\alpha$-nondegenerate, contrary to our assumption.
	Hence, $\dim(\overline{\mathcal{G}}) = k$, and $\Lambda$ is $(r',\alpha')$-special.
\end{proof}

Next, we show that the number of special flats is asymptotically smaller than the upper bound on the number of essentially-$\alpha$-nondegenerate flats coming from Theorem \ref{th:do}.

\begin{lemma}\label{th:boundingSpecial}
	For any $0 < \alpha < 1$ and positive integers $r,k$, the number of $(r,\alpha)$-special $k$-flats for $P$ is $O_{\alpha,k}(n^{k+1}r^{-k-2} + n^k r^{-k})$.
\end{lemma}
\begin{proof}
	We proceed by induction on $k$.
	The base case of $k=1$ is handled by Theorem \ref{th:sz-t}.
	Let $k>1$, and suppose that Theorem \ref{th:et-gen} has been shown to hold for all $k' < k$.
	
	Let $\mathcal{F}$ be the set of $(r,\alpha)$-special $k$-flats.
	We partition $\mathcal{F}$ into subsets $\mathcal{F}_b$ for each integer $1 \leq b < k$, and separately bound the size of each $\mathcal{F}_b$, as follows.
	
	First we assign each flat to one of the parts.
	For each $\Lambda \in \mathcal{F}$, let $\mathcal{G}_\Lambda$ be a minimal set of flats that shows that $\Lambda$ is special.
	Since $\mathcal{G}_\Lambda$ is minimal, we have that  $\overline{\mathcal{G}_\Lambda} = \Lambda$ but $\overline{\mathcal{G}_\Lambda \setminus \Gamma} \subsetneq \Lambda$ for each $\Gamma \in \mathcal{G}_\Lambda$.
	Let $\Gamma_\Lambda$ be an arbitrary flat in $\mathcal{G}_\Lambda$, let $b_\Lambda = \dim(\overline{\mathcal{G}_\Lambda \setminus \Gamma_{\Lambda}})$, and let $\mathcal{G}'_\Lambda = \mathcal{G}_\Lambda \setminus \Gamma_\Lambda$.
	Note that $\overline{\mathcal{G}'_\Lambda}$ is $(r,\alpha)$-special.
	Assign $\Lambda$ to $\mathcal{F}_{b_\Lambda}$.

	Now we fix $b$, and bound $|\mathcal{F}_b|$.
	The inductive hypothesis implies that
	$$|\{\overline{\mathcal{G}_\Lambda'} : \Lambda \in \mathcal{F}_b\}| = O_{\alpha,b}(n^{b+1}r^{-b-2} + n^b r^{-b}).$$
	Hence, it will suffice to bound the number of flats $\Lambda \in \mathcal{F}_b$ that share any fixed associated set $\mathcal{G}_\Lambda'$ by  $O_{\alpha,k}(n^{k-b}r^{-k+b})$.
	
	Let $\mathcal{R} \in \{\overline{\mathcal{G}_\Lambda'} : \Lambda \in \mathcal{F}_b\}$.
	Recall that $\pi_\mathcal{R}$ denotes the projection from $\mathcal{R}$.
	Let $M$ be a multiset of points in $\mathbb{P}^{d-1-b}$, with the multiplicity of a point $q$ equal to the number of points $p \in P$ so that $\pi(p) = q$.
	
	For each $\Lambda \in \mathcal{F}_b$ such that $\overline{\mathcal{G}_\Lambda'} = \mathcal{R}$, there is an $\alpha$-nondegenerate, $r$-rich flat $\Gamma$ such that $\overline{\Gamma, \mathcal{R}} = \Lambda$.
	Since $\overline{\Gamma,\mathcal{R}} = \Lambda$ and $\pi_{\mathcal{R}}(\Gamma)$ is disjoint from $\mathcal{R}$, we have that $\dim \pi_{\mathcal{R}}(\Gamma) = k-1-b$.
	We claim that $\pi_{\mathcal{R}}(\Gamma)$ is $(1-\alpha)r$-rich and $\alpha$-nondegenerate relative to $M$.
	First, note that $|\pi_\mathcal{R}(\Gamma) \cap M| = |\Gamma \cap P| - |\Gamma \cap \mathcal{R} \cap P|$.
	Since $\Gamma$ is $\alpha$-nondegenerate, $|\Gamma \cap \mathcal{R} \cap P| < \alpha |\Gamma \cap P|$, so $\pi_{\mathcal{R}}(\Gamma)$ is $(1-\alpha)r$-rich.
	Let $\Gamma'$ be a subflat of $\pi_{\mathcal{R}}(\Gamma)$, and let $\pi^{-1}(\Gamma') \subset \Gamma$ be the preimage of $\Gamma'$ in $\Gamma$.
	Note that $\dim \overline{\pi^{-1}(\Gamma'), \mathcal{R} \cap \Gamma} < \dim \Gamma$, hence $|\Gamma' \cap \pi_\mathcal{R}(P)| \leq \alpha |\Gamma \cap P| - |\Gamma \cap \mathcal{R} \cap P| \leq \alpha|\pi_\mathcal{R}(\Gamma) \cap \pi_\mathcal{R}(P)|$.
	Hence, $\pi_\mathcal{R}(\Gamma)$ is $\alpha$-nondegenerate.
	
	To complete the proof, we will use the following lemma, proved below.
	
	\begin{lemma}\label{th:mult-bound}
		Let $M$ be a multiset of points with total multiplicity $n$.
		The number of $r$-rich, $\alpha$-nondegenerate $k$-flats spanned by $M$ is bounded above by $(1-\alpha)^{-k}n^{k+1}r^{-k-1}$.
	\end{lemma}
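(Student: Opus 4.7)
The plan is to prove Lemma \ref{th:mult-bound} by a direct double-counting argument on ordered $(k+1)$-tuples of points of $M$. The key preliminary observation is that if $\Lambda$ is an $\alpha$-degenerate $k$-flat and $m = |M \cap \Lambda|$, then \emph{every} proper subflat $S \subsetneq \Lambda$ satisfies $|M \cap S| \leq \alpha m$, not just the $(k-1)$-dimensional ones. Indeed, any proper subflat of a $k$-flat is contained in some $(k-1)$-subflat $H \subseteq \Lambda$, and the $\alpha$-degeneracy hypothesis gives $|M \cap S| \leq |M \cap H| \leq \alpha m$.

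First, I would lower bound the number of ordered $(k+1)$-tuples in $(M \cap \Lambda)^{k+1}$ that span $\Lambda$, for each $r$-rich, $\alpha$-degenerate $k$-flat $\Lambda$. Pick $p_0 \in M \cap \Lambda$ arbitrarily; since we count $M$ with multiplicity, there are $m \geq r$ choices. For each $i = 1,\dots,k$, assuming $p_0,\dots,p_{i-1}$ have already been chosen in general position, the flat $\overline{p_0,\dots,p_{i-1}}$ has dimension $i-1 \leq k-1$ and so is a proper subflat of $\Lambda$. By the key observation it contains at most $\alpha m$ points of $M$ (with multiplicity), leaving at least $(1-\alpha)m \geq (1-\alpha)r$ choices for $p_i$. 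Multiplying through, each $r$-rich, $\alpha$-degenerate $k$-flat is spanned by at least $m \cdot ((1-\alpha)m)^k \geq (1-\alpha)^k r^{k+1}$ ordered $(k+1)$-tuples.

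Second, I would observe that any ordered $(k+1)$-tuple of points of $M$ spans at most one $k$-flat (it spans exactly one if the tuple is in general position, none otherwise), and that the total number of ordered $(k+1)$-tuples from $M$, counted with multiplicity, is exactly $n^{k+1}$. Therefore the number of $r$-rich, $\alpha$-degenerate $k$-flats spanned by $M$ is bounded above by
\[
\frac{n^{k+1}}{(1-\alpha)^k r^{k+1}} = (1-\alpha)^{-k}\, n^{k+1} r^{-k-1},
\]
which is exactly the claimed bound.

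There is no real obstacle here: the argument is a clean double count, and the only point that requires any verification at all is the propagation of the $\alpha$-degeneracy condition from $(k-1)$-subflats to arbitrary proper subflats, which is essentially immediate from the containment of subflats. The multiset aspect does not create difficulties because multiplicities behave additively under all the set-theoretic operations used.
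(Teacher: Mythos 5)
Your proof is correct and is essentially the paper's argument: both double-count ordered $(k+1)$-tuples (at most $n^{k+1}$ in total, at least $(1-\alpha)^k r^{k+1}$ affinely independent ones per $r$-rich $\alpha$-degenerate $k$-flat), with the paper merely phrasing the point-by-point selection as a probabilistic induction while you phrase it as a direct count using the observation that every proper subflat lies in a $(k-1)$-subflat.
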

	
	From Lemma \ref{th:mult-bound}, we get the required bound of $O(n^{k-b}r^{b-k})$ on the number of $(1-\alpha)r$-rich, $\alpha$-nondegenerate $(k-1-b)$-flats spanned by $\pi_\mathcal{R}(P)$, and this completes the proof of Theorem \ref{th:et-gen}.
\end{proof}

Here is the proof of Lemma \ref{th:mult-bound}.

\begin{proof}[Proof of Lemma \ref{th:mult-bound}]
There are $n^{k+1}$ ordered lists of $k+1$ points in $M$ (with repetitions allowed).
We show below that for any $r$-rich, $\alpha$-nondegenerate $k$-flat $\Lambda$, there are at least $(1-\alpha)^k r^{k+1}$ distinct lists of $k+1$ affinely independent points contained in $\Lambda$.
Since $k+1$ affinely independent points are contained in exactly one $k$-flat, this implies the conclusion of the lemma.

Let $\Lambda$ be an $r$-rich, $\alpha$-nondegenerate $k$-flat.
We will show that, for each $0 \leq k' \leq k$, $\Lambda$ contains $(1-\alpha)^{k'} r^{k'+1}$ distinct ordered lists of $k'+1$ affinely independent points of $M$.
We proceed by induction on $k'$.
The base case of $k'=0$ is immediate from the fact that $\Lambda$ is $r$-rich.
Let $k' > 0$, and suppose that $\Lambda$ contains $(1-\alpha)^{k' - 1} r^{k'}$ distinct ordered lists of $k'$ affinely independent points of $M$.

Let $V$ be the set of pairs $(\mathbf{v},p)$, where $\mathbf{v}$  is an ordered list of $k'$ affinely independent points of $M$ contained in $\Lambda$, and $p$ is a point of $M$ contained in $\Lambda \setminus \overline{\mathbf{v}}$.
By the inductive hypothesis, we know that there are $(1-\alpha)^{k'-1}r^{k'}$ choices for $\mathbf{v}$.
Since $\dim{\overline{\mathbf{v}}} = k'-1 \leq k-1$, the hypothesis that $\Lambda$ is $\alpha$-nondegenerate implies that $|\overline{\mathbf{v}} \cap M| \leq \alpha |\Lambda \cap M|$.
Consequently, for a fixed choice of $\mathbf{v}$, there are at least $(1 - \alpha) r$ choices for $p$.
Hence $|V| \geq (1-\alpha)^{k'}r^{k' + 1}$, as claimed.

\end{proof}

Now the proof of Theorem \ref{th:et-gen} is done.
Theorem \ref{th:do} gives the required bound on the number of essentially-$\alpha$-nondegenerate flats.
Lemma \ref{th:specialFlats} shows that the flats that are $\alpha$-nondegenerate but not essentially-$\alpha$-nondegenerate are special, and Lemma \ref{th:boundingSpecial} gives the required bound on special flats.

\section{Proof of Theorem \ref{th:improvedBeck}}

In this section, we show that Theorem \ref{th:improvedBeck} follows from Theorem \ref{thm:essentialDimBound}.

\begin{proof}
	Let $c_k$ be the constant in the lower bound of Theorem \ref{thm:essentialDimBound}.
	If $f_k \geq c_k n^{k+1}$, then the third alternative of Theorem \ref{th:improvedBeck} holds and we are done.
	
Suppose that $f_k < c_k n^{k+1}$.
Theorem \ref{thm:essentialDimBound} implies that there is a set $\mathcal{G}$ of flats such that $\sum_{\Gamma \in \mathcal{G}} \dim(\Gamma) \leq k$, at least $(1-o(1))n$ points of $P$ lie in some flat of $\mathcal{G}$, and each flat of $\mathcal{G}$ has dimension at least $1$.
If $\mathcal{G}$ consists of a single flat, then the first alternative holds and we're done.
Suppose that $|\mathcal{G}| > 1$.
We show below that, unless $k$ is odd and $\mathcal{G}$ is the union of $k$ lines, we can partition $\mathcal{G}$ into $\mathcal{G}_1, \mathcal{G}_2$ such that $\dim \overline{\mathcal{G}_1}, \dim \overline{\mathcal{G}_2} \leq k-1$.
Since either $|P \cap (\cup_{\Gamma \in \mathcal{G}_1} \Gamma)| \geq (1/2)|P \cap (\cup_{\Gamma \in \mathcal{G}} \Gamma)|$ or $|P \cap (\cup_{\Gamma \in \mathcal{G}_2} \Gamma)| \geq (1/2) |P \cap (\cup_{\Gamma \in \mathcal{G}} \Gamma)|$, this is enough to prove the theorem.

Let $\mathcal{G} = \{\Gamma_1, \Gamma_2, \ldots, \Gamma_m\}$, with $\dim(\Gamma_1) \leq \dim(\Gamma_2) \leq \ldots \leq \dim(\Gamma_m)$.
Let $\mathcal{G}_1 = \{\Gamma_1, \Gamma_2, \ldots, \Gamma_{m_1}\}$, with $m_1$ chosen as large as possible under the constraint $\dim \overline{\mathcal{G}_1} \leq k-1$.

Let $s = \dim \Gamma_{m_1 + 1}$.
Note that $\dim \overline{\mathcal{G}_1} + s \geq k-1$, since otherwise $\Gamma_{m_1 + 1}$ would be included in $\mathcal{G}$.
Since each flat in $\mathcal{G}_1$ has dimension at most $s$, by (\ref{eqn:dimSpanManyFlats}) we have
\begin{align*}
\sum_{\Gamma \in \mathcal{G}_1} \dim \Gamma &\geq \dim \overline{\mathcal {G}_1} + 1 - |\mathcal{G}_1|, \\
&\geq k - s - \frac{1}{s}\sum_{\Gamma \in \mathcal{G}_1} \dim \Gamma.
\end{align*}
Hence,
$$k - \sum_{\Gamma \in \mathcal{G}_2} \dim \Gamma \geq \sum_{\Gamma \in \mathcal{G}_1} \dim \Gamma \geq \frac{k-s}{1 + s^{-1}},$$
and so
$$\sum_{\Gamma \in \mathcal{G}_2} \dim \Gamma \leq \frac{s + ks^{-1}}{1 + s^{-1}}.$$

Since each flat in $\mathcal{G}_2$ has dimension at least $s$, we have $|\mathcal{G}_2| \leq \frac{1}{s} \sum_{\Gamma \in \mathcal{G}_2} \dim \Gamma$.
Applying (\ref{eqn:dimSpanManyFlats}), we have
\begin{align*}
\dim \overline{\mathcal{G}_2} &\leq |\mathcal{G}_2| - 1 + \sum_{\Gamma \in \mathcal{G}_2} \dim \Gamma, \\
&\leq (1 + s^{-1}) \sum_{\Gamma \in \mathcal{G}_2} \dim \Gamma - 1,\\
&\leq s + ks^{-1} - 1,\\
&\leq k,
\end{align*}
with equality only if $s=1$ and $|\mathcal{G}_2| = \frac{1}{s}\sum_{\Gamma \in \mathcal{G}_2} \dim \Gamma$; this occurs only if $\mathcal{G}$ is a set of lines.
Note that the case $s = k$ is eliminated by the assumption that $|\mathcal{G}| > 1$ together with the fact that $\sum_{\Gamma \in \mathcal{G}} \dim(\Gamma) \leq k$.

If $\mathcal{G}$ is a set of lines and $k$ is even, then $\mathcal{G}_2$ consists of at most $k/2$ lines, which span at most a $(k-1)$-flat.
Hence, if $\dim \overline{\mathcal{G}_2} = k$, then $\mathcal{G}$ must be a set of lines, and $k$ must be odd.
Since $\dim \overline{\mathcal{G}_1} \leq k-1$ by construction, this completes the proof.

\end{proof}

\bibliographystyle{plain}
\bibliography{purdy}

\end{document}